\documentclass{amsart}
\usepackage{amsmath}
\usepackage{amssymb}
\usepackage{amsthm}
\usepackage{graphicx}

\newtheorem{theorem}{Theorem}[section]

\usepackage[shortalphabetic,initials]{amsrefs}

\theoremstyle{definition}
\newtheorem{definition}[theorem]{Definition}
\newtheorem{remark}[theorem]{Remark}
\newtheorem{example}[theorem]{Example}
\newtheorem{construction}[theorem]{Construction}

\newtheorem{notation}[theorem]{Notation}
\newtheorem{question}[theorem]{Question}

\DeclareMathOperator{\lcm}{lcm}

\DeclareMathOperator{\gens}{gens}

\DeclareMathOperator{\mdeg}{mdeg}

\begin{document}
\title{Three simplicial resolutions}
\author{Jeffrey Mermin}
\address{Department of Mathematics, Oklahoma State University,
401 Mathematical Sciences, Stillwater, OK 74078}
\email{mermin@math.okstate.edu}
\urladdr{http://www.math.okstate.edu/$\sim$mermin/}
\keywords{Simplicial resolution}
\subjclass[2010]{13D02; 13F20; 05E40}
\date{}
\begin{abstract}We describe the Taylor and Lyubeznik resolutions as
  simplicial resolutions, and use them to show that the Scarf complex of
  a monomial ideal is the intersection of all its minimal free
  resolutions.
\end{abstract}
\maketitle

\section{Introduction}

Let $S=k[x_{1},\dots, x_{n}]$, and let $I\subset S$ be a monomial
ideal.  An important object in the study of $I$ is its \emph{minimal
  free resolution}, which encodes essentially all information about
$I$.  For example, the Betti numbers of $I$ can be read off as the
ranks of the modules in its minimal resolution.  

There are computationally intensive algorithms to compute the minimal
resolution of an arbitrary ideal (for example, in Macaulay 2
\cite{GS}, the command \texttt{res I} returns the minimal resolution of
$S/I$), 
but no general description is known, even for monomial ideals.  Thus,
it is an ongoing problem of considerable interest to find classes of
ideals whose minimal resolutions can be described easily.  A related
problem is to describe non-minimal resolutions which apply to large
classes of monomial ideals.  

The most general answer to the latter question is Taylor's resolution,
a (usually highly non-minimal) resolution which resolves an arbitrary
monomial ideal; it is discussed in Section \ref{s:Taylor}.

A very successful approach to both problems in the last decade has been to find
combinatorial or topological objects whose structures encode
resolutions in some way.  This approach began with \emph{simplicial
  resolutions} \cite{BPS}, and has expanded to involve polytopal
complexes \cites{NR, Si}, cellular complexes \cite{BS}, CW complexes
\cites{BW, Ve}, lattices \cites{Ma,Ph,PV}, posets \cite{Cl}, matroids
\cite{Tc}, and 
discrete Morse theory \cite{JW}.  

Resolutions associated to combinatorial objects have distinguished
bases, and relationships between the objects lead to relationships
between these bases in a very natural way.  It thus becomes possible
to compare and combine resolutions in all the ways that we can compare
or combine combinatorial structures.  For example, most of these
resolutions turn out to be subcomplexes of the Taylor resolution in a
very natural way.  The only new result in the paper is Theorem
\ref{thethm}, which describes the intersection of all the
simplicial resolutions of an ideal.

In section 2, we describe some background material and introduce
notation used throughout the paper.

Section 3 introduces the Taylor resolution in a way intended to
motivate simplicial resolutions, which are introduced in section 4.

Section 5 describes the Scarf complex, a simplicial complex which
often supports the minimal resolution of a monomial ideal, and
otherwise does not support any resolution.  

Section 6 defines the family of Lyubeznik resolutions.  This section
is essentially a special case of an 
excellent paper of Novik \cite{No}, which describes a more general
class of resolutions based on so-called ``rooting maps''.

Section 7 uses the Lyubeznik resolutions to prove Theorem
\ref{thethm}, that the Scarf complex of an ideal is equal to the
intersection of all its simplicial resolutions.

\textbf{Acknowledgements}  I thank Ananth Hariharan, Manoj Kummini,
Steve Sinnott, and the algebra groups at Cornell and Kansas for
inspiration and helpful discussions.

\section{Background and Notation}

Throughout the paper $S=k[x_{1},\dots, x_{n}]$ is a polynomial ring
over an arbitrary field $k$.  In the examples, we use the variables
$a,b,c,\dots$ instead of $x_{1},x_{2},x_{3},\dots$.

We depart from the standard notation in two ways, each designed to
privilege monomials.  First, we write the standard or ``fine''
multigrading multiplicatively, indexed by monomials, rather than
additively, indexed by $n$-tuples.  Second, we index our simplices by
monomials rather than natural numbers.  Details of both departures, as
well as some background on resolutions, are below.

\subsection{Algebra}

If $I\subset S$ is an ideal, then a \emph{free resolution} of $S/I$ is an
exact sequence
\[
\mathbb{F}:\dots \xrightarrow{\phi_{n}}F_{n}\xrightarrow{\phi_{n-1}}F_{n-1} \dots
\xrightarrow{\phi_{0}}F_{0}\to S/I\to 0
\]
where each of the $F_{i}$ is a free $S$-module.  

We say that $\mathbb{F}$ is \emph{minimal} if each of the modules
$F_{i}$ has minimum possible rank; in this case the ranks are the
\emph{Betti numbers} of $S/I$.  

It is not at all obvious \emph{a priori} that minimal resolutions
should exist.  For this reason, when $I$ is homogeneous, most standard
treatments take the 
following theorem as the definition instead:

\begin{theorem}\label{minimal}
Let $I$ be a homogeneous ideal, and let $\mathbb{F}$ be a resolution
of $S/I$.  Write $\mathbf{m}=(x_{1},\dots, x_{n})$.  Then $\mathbb{F}$
is minimal if and only if $\phi_{i}(F_{i+1})\subset \mathbf{m}F_{i}$
for all $i$.  
\end{theorem}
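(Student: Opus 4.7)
My plan is to relate the condition $\phi_i(F_{i+1})\subset\mathbf{m}F_i$ to the Tor invariants $\Tor_i^S(S/I,k)$, where $k=S/\mathbf{m}$. The key observation is that $\phi_i(F_{i+1})\subset\mathbf{m}F_i$ is exactly the statement that the induced map $\bar\phi_i\colon F_{i+1}\otimes_S k\to F_i\otimes_S k$ is zero, since $\bar\phi_i$ is represented by the reduction modulo $\mathbf{m}$ of any matrix for $\phi_i$.

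First, for an arbitrary resolution $\mathbb{F}$, I would establish the elementary bound $\operatorname{rank} F_i\geq \dim_k \Tor_i^S(S/I,k)$ and pin down when equality holds. Since $\Tor_i^S(S/I,k) = H_i(\mathbb{F}\otimes_S k)$ is independent of the choice of resolution, and since this homology equals $\ker\bar\phi_{i-1}/\operatorname{im}\bar\phi_i$ inside $F_i\otimes_S k\cong k^{\operatorname{rank} F_i}$, a rank count yields
\[
\operatorname{rank} F_i = \dim_k H_i(\mathbb{F}\otimes_S k) + \dim_k \operatorname{im}\bar\phi_i + \dim_k \operatorname{im}\bar\phi_{i-1}.
\]
The bound follows because both image terms are nonnegative, and equality at position $i$ forces both $\bar\phi_i = 0$ and $\bar\phi_{i-1} = 0$; in particular, $\operatorname{rank} F_i = \dim_k\Tor_i^S(S/I,k)$ for every $i$ if and only if $\phi_i(F_{i+1})\subset\mathbf{m}F_i$ for every $i$.

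To close the loop I still need to show that the lower bound $\dim_k\Tor_i^S(S/I,k)$ is actually attained by some resolution, so that it coincides with the minimum possible rank of $F_i$. For this I would invoke the standard construction via graded Nakayama: take a homogeneous minimal generating set of $I$ as the image of a basis of $F_0$, take a homogeneous minimal generating set of $\ker\phi_0$ as the image of a basis of $F_1$, and iterate. Minimality of each generating set forces the corresponding differential into $\mathbf{m}F_i$, so the resulting resolution hits the lower bound at every step. Combined with the previous paragraph, the minimum possible rank of $F_i$ is exactly $\dim_k\Tor_i^S(S/I,k)$, and $\mathbb{F}$ is minimal if and only if $\phi_i(F_{i+1})\subset\mathbf{m}F_i$ for all $i$. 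I expect the main obstacle to be the Nakayama step: one must know that each successive syzygy module is finitely generated and admits a minimal homogeneous generating set, both of which follow from the Noetherianness of $S$ together with the graded Nakayama lemma.
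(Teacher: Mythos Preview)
The paper does not give its own proof of this theorem; immediately after the statement it writes ``The proof of Theorem \ref{minimal} is technical; see, for example, \cite{Pe}*{Section 9}.'' Your argument is correct and is in fact the standard one found in such references: the rank identity
\[
\operatorname{rank} F_i \;=\; \dim_k \Tor_i^S(S/I,k) \;+\; \dim_k \operatorname{im}\bar\phi_i \;+\; \dim_k \operatorname{im}\bar\phi_{i-1}
\]
gives the lower bound together with the characterization of equality, and the graded-Nakayama construction of a resolution by successively choosing minimal homogeneous generating sets shows the bound is attained. One small indexing slip: in the paper's convention $F_0\to S/I$ is the augmentation and $\phi_0\colon F_1\to F_0$, so a minimal generating set of $I$ should give the basis of $F_1$, not $F_0$; this does not affect the substance.
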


The proof of Theorem \ref{minimal} is technical; see, for example,
\cite{Pe}*{Section 9}.  

All the ideals we consider are homogeneous; in fact, they are
\emph{monomial} ideals, which is a considerably stronger property.

\begin{definition}  An ideal $I$ is a \emph{monomial ideal} if it has
  a generating set consisting of monomials.  There exists a unique minimal such
  generating set; we write $\gens(I)$ and call its elements the
  \emph{generators} of $I$.
\end{definition}

Monomial ideals respect a ``multigrading'' which refines the usual
grading.

\begin{notation}  We write the multigrading multiplicatively.  That
  is, for each monomial $m$ of $S$, set $S_{m}$ equal to the
  $k$-vector space spanned by $m$.  Then $S=\oplus S_{m}$, and
  $S_{m}\cdot S_{n}=S_{mn}$, so this decomposition is a grading.  We
  say that the monomial $m$ has \emph{multidegree} $m$.  We
  allow multidegrees to have negative exponents, so, for example, the
  twisted module $S(m^{-1})$ is a free module with generator in
  multidegree $m$, and $S(m^{-1})_{n}\cong S_{m^{-1}n}$ as a vector
  space; this is one-dimensional if no exponent of $m^{-1}n$ is negative,
  and trivial otherwise.  Note that $S=S(1)$.  
\end{notation}

If $N$ and $P$ are multigraded modules, we say that a map $\phi:N\to
P$ is \emph{homogeneous of degree} $m$ if 
$\phi(N_{n})\subset P_{mn}$ for all $n$, and that $\phi$ is simply
\emph{homogeneous} if it is homogeneous of degree $1$.  We say that a
resolution (or, more generally, an algebraic chain complex) is
\emph{homogeneous} if all its maps are homogeneous.

The minimal resolution of $S/I$ can be made homogeneous in a unique
way by assigning appropriate multidegrees to the generators of its
free modules; counting these generators by multidegree yields the
\emph{multigraded Betti numbers} of $S/I$.

\subsection{Combinatorics}

Let $M$ be a set of monomials (typically, $M$ will be the generators
of $I$).  The \emph{simplex on} $M$ is the set of all subsets of $M$;
we denote this by $\Delta_{M}$.  We will sometimes refer to the
elements of $M$ as \emph{vertices} of $\Delta_{M}$.

A \emph{simplicial complex} on $M$ is a subset of $\Delta_{M}$ which is
closed under the taking of subsets.  If $\Gamma$ is a
simplicial complex on $M$ and $F\in \Gamma$, we say that $F$ is a
\emph{face} of $\Gamma$.  Observe that if $F$ is a face of $\Gamma$
and $G\subset F$, then $G$ is also a face of $\Gamma$.  We require
that simplicial complexes be nonempty; that is, the empty set must
always be a face.  (In fact, for our purposes, we may as well assume
that every vertex must be a face.)  

If $F$ is a face of $\Gamma$, we assign $F$ the multidegree
$\lcm(m:m\in F)$.  Note that the vertex $m$ has multidegree $m$, and
that the empty set has multidegree $1$.  The \emph{order} of a face
$F$, written $|F|$, is the number of vertices in $F$; this is one
larger than its dimension.  If $G\subset F$ and $|G|=|F|-1$, we say
that $G$ is a \emph{facet} of $F$.

We adopt the convention that the unmodified word ``complex'' will
always mean an algebraic chain complex; simplicial complexes will be
referred to with the phrase ``simplicial complex''.  However, recall
that every simplicial complex is naturally associated to a chain
complex by the following standard construction from algebraic topology:

\begin{construction}\label{topochaincx}
Let $\Gamma$ be a simplicial complex on $M$, and impose an order on
the monomials of $M$ by writing $M=\{m_{1},\dots, m_{r}\}$.  Then we
associate to $\Gamma$ the chain complex $\mathbb{C}_{\Gamma}$ as
follows:  

For every face $F\in \Gamma$, we create a formal symbol $[F]$.
If we write $F=\{m_{i_{1}},\dots,
m_{i_{s}}\}$ with increasing indices $i_{j}$, then for each facet $G$
of $F$ we may write $G=F\smallsetminus\{m_{i_{j}}\}$ for some $j$; we
define an orientation by setting $\varepsilon_{G}^{F}$ equal to $1$ if
$j$ is odd and to $-1$ if $j$ is even.  For each $s$, let $C_{s}$ be
the $k$-vector space spanned by the symbols $[F]$ such that $|F|=s$,
and define the map 
\begin{align*}
\phi_{s-1}:C_{s}&\to C_{s-1}\\
[F]&\mapsto\sum_{G\text{ is a facet of }F}\varepsilon_{G}^{F}[G].
\end{align*}

Then we set $\mathbb{C}_{\Gamma}$ equal to the complex of vector spaces
\[
\mathbb{C}_{\Gamma}:0\to
C_{r}\xrightarrow{\phi_{r-1}}\dots\xrightarrow{\phi_{1}} C_{1} 
\xrightarrow{\phi_{0}} C_{0}\to 0.
\]
\end{construction}

The proof that $\mathbb{C}_{\Gamma}$ is a chain complex involves a
straightforward computation of $\phi^{2}([m_{i_{1}},\dots,
  m_{i_{s}}])$.  The (reduced) homology of $\Gamma$ is defined to be
the homology of this complex.

In section 4, we will replace this complex with a homogeneous complex
of free $S$-modules.

\section{The Taylor resolution}\label{s:Taylor}

Let $I=(m_{1},\dots,m_{s})$ be a monomial ideal.  The Taylor
resolution of $I$ is constructed as follows:

\begin{construction}\label{taylorconstruction}

For a subset $F$ of $\{m_{1},\dots, m_{r}\}$, set
$\lcm(F)=\lcm\{m_{i}:m_{i}\in F\}$.  For each such $F$, we define a
formal symbol $[F]$, called a \emph{Taylor symbol}, with multidegree
equal to $\lcm(F)$.  For each 
$i$, set $T_{i}$ equal to the free $S$-module with basis
$\{[F]:|F|=i\}$ given by the symbols corresponding to subsets of size
$i$.  Note that $T_{0}=S[\varnothing]$ is a free module of rank one,
and that all other $T_{i}$ are multigraded modules with generators in
multiple multidegrees depending on the symbols $[F]$.  

Define $\phi_{-1}:T_{0}\to S/I$ by $\phi_{-1}(f[\varnothing])=f$.
Otherwise, we construct $\phi_{i}:T_{i+1}\to T_{i}$ as follows.

Given $F=\{m_{j_{1}},\dots, m_{j_{i}}\}$, written with the indices in
increasing order, and $G=F\smallsetminus\{m_{j_{k}}\}$, we set the
\emph{sign} $\varepsilon_{G}^{F}$ equal to $1$ if $k$ is odd and to $-1$ if
$k$ is even.  Finally, we set 
\[
\phi_{F}=\sum_{G\text{ is a facet of }F}\varepsilon_{G}^{F}\frac{\lcm(F)}{\lcm(G)}[G],
\]

and define $\phi_{i}:T_{i+1}\to T_{i}$ by extending the various
$\phi_{F}$.  Observe that all of the $\phi_{i}$ are homogeneous with
multidegree $1$.  

The \emph{Taylor resolution} of $I$ is the complex
\[
\mathbb{T}_{I}:0\to T_{r}\xrightarrow{\phi_{r-1}}\dots\xrightarrow{\phi_{1}} T_{1} 
\xrightarrow{\phi_{0}} T_{0}\to S/I\to 0.
\]

It is straightforward to show that the Taylor resolution is a
homogeneous chain complex.
\end{construction}

The construction of the Taylor resolution is very similar to Construction
\ref{topochaincx}; in fact, if $\Gamma$ is the complete simplex, the
only difference is the presence of the lcms in the boundary maps.  We
will explore this connection in the next section.

\begin{example}
Let $I=(a,b^{2},c^{3})$.  Then the Taylor resolution of $I$ is 
\[
\mathbb{T}_{I}:0 \to S[a,b^{2},c^{3}]
\xrightarrow{\begin{pmatrix}a\\-b^{2}\\c^{3}\end{pmatrix}} 
\begin{array}{c}S[b^{2},c^{3}]\\ \oplus\\ S[a,c^{3}]\\ \oplus\\ 
S[a,b^{2}]\end{array} 
\xrightarrow{\begin{pmatrix}0 & -c^{3} & -b^{2}\\ -c^{3} & 0 & a\\ b^{2} & a
    & 0\end{pmatrix}} \begin{array}{c}S[a]\\ \oplus\\ S[b^{2}]\\ \oplus\\ 
S[c^{3}]\end{array}
\xrightarrow{\begin{pmatrix}a&b^{2}&c^{3}\end{pmatrix}} S[\varnothing]\to S/I \to 0. 
\]

Observe that $I$ is a complete intersection and $\mathbb{T}_{I}$ is
its Koszul complex.  In fact, these two complexes coincide for all
monomial complete intersections.  
\end{example}

\begin{example}\label{stdexample}
Let $I=(a^{2},ab,b^{3})$.  Then the Taylor resolution of $I$ is 
\[
\mathbb{T}_{I}:0 \to S[a^{2},ab,b^{3}]
\xrightarrow{\begin{pmatrix}a\\-1\\b^{2}\end{pmatrix}} 
\begin{array}{c}S[ab,b^{3}]\\ \oplus\\ S[a^{2},b^{3}]\\ \oplus\\ 
S[a^{2},ab]\end{array} 
\xrightarrow{\begin{pmatrix}0 & -b^{3} & -b\\ -b^{2} & 0 & a\\ a^{2} & a
    & 0\end{pmatrix}} \begin{array}{c}S[a^{2}]\\ \oplus\\ S[ab]\\ \oplus\\ 
S[b^{3}]\end{array}
\xrightarrow{\begin{pmatrix}a^{2}&ab&b^{3}\end{pmatrix}} S[\varnothing]\to S/I \to 0. 
\]
This is not a minimal resolution; the Taylor resolution is very rarely
minimal.   
\end{example}

\begin{theorem}\label{taylorthm}
The Taylor resolution of $I$ is a resolution of $I$.
\end{theorem}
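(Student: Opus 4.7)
My plan is to check exactness of the augmented complex $\mathbb{T}_I \to S/I \to 0$ one multidegree at a time. Since every differential is homogeneous of multidegree $1$, the complex is exact if and only if each multigraded strand is exact as a sequence of $k$-vector spaces. Fix a monomial $m$; the degree-$m$ piece of $T_i$ has $k$-basis
\[
\left\{ \tfrac{m}{\lcm(F)}[F] : F\subseteq \{m_1,\dots,m_r\},\ |F|=i,\ \lcm(F)\mid m \right\},
\]
and the collection of indexing subsets forms the simplicial complex
\[
\Delta_m = \{F\subseteq\{m_1,\dots,m_r\} : \lcm(F)\text{ divides }m\},
\]
which is closed under subsets because $\lcm(G)\mid\lcm(F)$ whenever $G\subseteq F$.

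Next I would argue that under the identification $(m/\lcm(F))[F]\leftrightarrow F$, the strand of $\mathbb{T}_I$ in multidegree $m$ becomes the reduced simplicial chain complex of $\Delta_m$ with coefficients in $k$. The key point is that the factor $\lcm(F)/\lcm(G)$ appearing in $\phi_F$ combines with the scalar $m/\lcm(F)$ on the source to yield exactly $m/\lcm(G)$, so after the identification the differential reduces to the signed sum over facets $\sum_G \varepsilon_G^F G$. The augmentation $\phi_{-1}$ sends the basis vector $m[\varnothing]\in (T_0)_m$ to $m + I\in (S/I)_m$, which vanishes precisely when $m\in I$.

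Then I would split into cases. If $m\notin I$, no generator $m_i$ divides $m$, so $\Delta_m$ consists only of the empty face and the strand collapses to the isomorphism $k\xrightarrow{\sim} k$ given by $m[\varnothing]\mapsto m+I$. If $m\in I$, pick any generator $m_i$ with $m_i\mid m$ and observe that $\Delta_m$ is a cone with apex $m_i$: for every $F\in\Delta_m$, $\lcm(F\cup\{m_i\})=\lcm(\lcm(F), m_i)$ still divides $m$, so $F\cup\{m_i\}\in\Delta_m$. A cone is contractible and so has vanishing reduced homology, making its reduced chain complex acyclic; since $(S/I)_m=0$ in this case, this is precisely the strand, and exactness follows.

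The main obstacle is the bookkeeping that identifies the multidegree-$m$ strand with the reduced simplicial chain complex of $\Delta_m$: one has to verify that the lcm ratios appearing in $\phi_F$ are correctly absorbed into the chosen basis of $(T_i)_m$ and that the signs $\varepsilon_G^F$ agree with the topological boundary. Once that identification is in place, the cone argument dispatches the nontrivial case in a single line.
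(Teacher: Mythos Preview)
Your proof is correct and follows the same approach as the paper's main proof at the end of Section~4: restrict to each multidegree, identify the strand with the reduced chain complex of $\Delta_{\leq\mu}$, and note that this is a full simplex, hence a cone, hence acyclic. The paper factors this through the general Bayer--Peeva--Sturmfels criterion (Theorem~\ref{criterion})---whose proof is exactly the bookkeeping identification you spell out---whereas you merge the criterion and its application into a single argument.
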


It is not too difficult to show that $\phi^{2}=0$ in the Taylor
complex, but it is not at all clear from the construction that the
complex is exact.  This
seems to be most easily established indirectly by showing that the
Taylor resolution is a special case of some more general phenomenon.
We will prove Theorem \ref{taylorthm} in the next section, using the
language of simplicial resolutions.  Traditionally, one builds the
Taylor resolution as an iterated mapping cone; we sketch that argument
below.

\begin{proof}[Sketch of Theorem \ref{taylorthm}]
Write $I=(m_{1},\dots, m_{r})$,  and let $J=(m_{1},\dots, m_{r-1})$.
Consider the short exact sequence
\[
0\to \frac{S}{(J:m_{r})} \xrightarrow{m_{r}}
\frac{S}{J}\to \frac{S}{I}\to 0.
\]

If $(\mathbb{A},\alpha)$ and $(\mathbb{B},\beta)$ are free resolutions
of $S/(J:m_{r})$ and $S/J$, respectively, then multiplication by $m_{r}$
induces a map of complexes $(m_{r})_{*}:\mathbb{A}\to\mathbb{B}$.  The
\emph{mapping cone complex} $(\mathbb{T},\gamma)$ is defined by
setting $T_{i}=B_{i}\oplus A_{i-1}$ and $\gamma|_{\mathbb{B}}=\beta$,
$\gamma|_{\mathbb{A}}=(m_{r})_{*}-\alpha$; it is a free resolution of
  $S/I$ (see, for example, \cite{Pe}*{Section 27}).

Inducting on $r$, $S/J$ is resolved by the Taylor resolution on its
generators $\{m_{1},\dots, m_{r-1}\}$, and $S/(J:m_{r})$ is resolved
by the Taylor resolution on its (possibly redundant) generating set
$\{\frac{\lcm(m_{1},m_{r})}{m_{r}},\dots,
\frac{\lcm(m_{r-1},m_{r})}{m_{r}}\}$.  The resulting mapping cone is
the Taylor resolution of $I$.
\end{proof}

\section{Simplicial resolutions}

If $\Gamma=\Delta$, the construction of the Taylor resolution differs
from the classical topological construction of the chain complex
associated to $\Gamma$ only by the presence of the monomials
$\frac{\lcm(F)}{\lcm(G)}$ in its differential maps.  This observation
leads naturally to the question of what other simplicial complexes
give rise to resolutions in the same way.  The resulting resolutions
are called \emph{simplicial}.  Simplicial resolutions and, more
generally, resolutions arising from other topological structures (it
seems that the main results can be tweaked to work for anything
defined in terms of skeletons and boundaries) have proved to be an
instrumental tool in the understanding of monomial ideals.  We
describe only the foundations of the theory here; for a more detailed
treatment, the original paper of Bayer, Peeva, and Sturmfels
\cite{BPS} is a very readable introduction.

\begin{construction}\label{simplicialres}
Let $M$ be a set of monomials, and let $\Gamma$ be a simplicial
complex on $M$ (recall that this means that the vertices of $\Gamma$
are the monomials in $M$).  Fix an ordering on the elements of $M$;
this induces an orientation $\varepsilon$ on $\Gamma$.  Recall that
$\varepsilon_{G}^{F}$ is either $1$ or $-1$ if $G$ is a facet of $F$
(see Construction \ref{topochaincx} for the details); it is often
convenient to formally set $\varepsilon_{G}^{F}$ equal to zero when
$G$ is not a facet of $F$.  

We assign a multidegree to each face $F\in \Gamma$ by the rule
$\mdeg(F)=\lcm(m:m\in F)$ (recall that $F$ is a subset of $M$, so its
elements are monomials).  

Now for each face $F$ we create a formal symbol $[F]$ with multidegree
$\mdeg(F)$.  Let $H_{s}$ be the free module with basis $\{[F]:|F|=s\}$,
and define the differential
\begin{align*}
\phi_{s-1}:H_{s}&\to H_{s-1}\\
[F]&\mapsto\sum_{G\text{ is a facet of
  }F}\varepsilon_{G}^{F}\frac{\mdeg(F)}{\mdeg(G)}[G]. 
\end{align*}

The \emph{complex associated to} $\Gamma$ is then the algebraic chain
complex
\[
\mathbb{H}_{\Gamma}:0\to
H_{r}\xrightarrow{\phi_{r-1}}\dots\xrightarrow{\phi_{1}} H_{1} 
\xrightarrow{\phi_{0}} H_{0}\to S/I\to 0.
\]
\end{construction}

Construction \ref{simplicialres} differs from Construction
\ref{topochaincx} in that it is a complex of free $S$-modules rather
than vector spaces.  The boundary maps are identical except for the
monomial coefficients, which are necessary to make the complex
homogeneous.

\begin{example}
Let $I$ be generated by $M$, and let $\Delta$ be the simplex with
vertices $M$.  Then the Taylor resolution of $I$ is the complex
associated to $\Delta$.
\end{example}

\begin{example}\label{threeexamples}
Let $I$ be generated by $M=\{a^{2},ab,b^{3}\}$, and let $\Delta$ be
the full simplex on 
$M$, $\Gamma$ the simplicial complex
with facets $\{a^{2},ab\}$ and $\{ab,b^{3}\}$, and $\Theta$ the
zero-skeleton of $\Delta$.  These simplicial complexes, with their
faces labeled by multidegree, are pictured in figure \ref{thefigure}.

\begin{figure}[hbtf]
\includegraphics[width=5in]{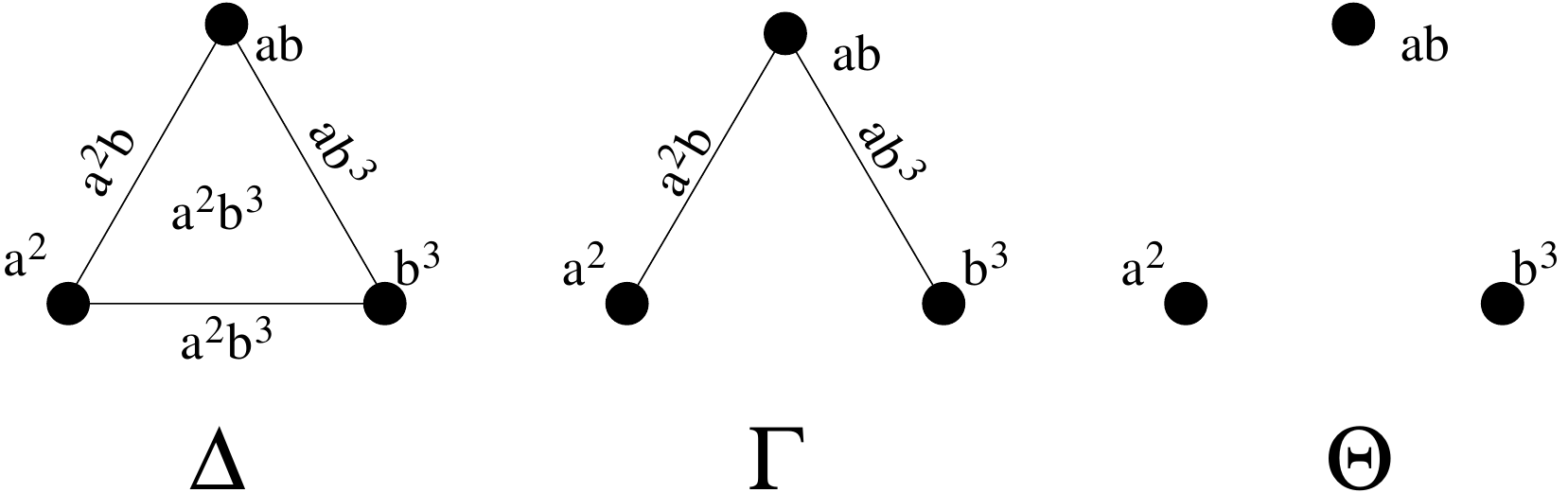}
\caption{The simplicial complexes $\Delta$, $\Gamma$, and $\Theta$ of
  example \ref{threeexamples}.}\label{thefigure}
\end{figure}

The algebraic complex associated to $\Delta$ is the Taylor resolution
of Example \ref{stdexample}.  The other two associated complexes are
\[
\mathbb{H}_{\Gamma}:0\to \begin{array}{c}S[a^{2},ab]\\ \oplus\\ 
  S[ab,b^{3}]\end{array}
  \xrightarrow{\begin{pmatrix}-b&0\\a&-b^{2}\\0&a\end{pmatrix}} 
  \begin{array}{c}S[a^{2}]\\ \oplus\\ S[ab]\\ \oplus\\
    S[b^{3}]\end{array}
  \xrightarrow{\begin{pmatrix}a^{2}&ab&b^{3}\end{pmatrix}}S[\varnothing]\to S/I \to 0
\]
and
\[
\mathbb{H}_{\Theta}:0\to \begin{array}{c}S[a^{2}]\\ \oplus\\ S[ab]\\ \oplus\\
    S[b^{3}]\end{array}
  \xrightarrow{\begin{pmatrix}a^{2}&ab&b^{3}\end{pmatrix}} S[\varnothing]\to S/I \to 0.
\]

$\mathbb{H}_{\Gamma}$ is a resolution (in fact, the minimal
resolution) of $S/I$, and $\mathbb{H}_{\Theta}$ is not a resolution of $I$.
\end{example}

The algebraic complex associated to $\Gamma$ is not always exact; that
is, it does not always give rise to a resolution of $I$.  When this
complex is exact, we call it a \emph{simplicial resolution}, or
\emph{the (simplicial) resolution supported on $\Gamma$}.  It turns
out that there is a topological condition describing whether
$\Gamma$ supports a resolution.

\begin{definition}  Let $\Gamma$ be a simplicial complex on $M$, and
  let $\mu$ be a multidegree.  We set $\Gamma_{\leq \mu}$ equal to the
  simplicial subcomplex of $\Gamma$ consisting of the faces with
  multidegree divisible by $\mu$,
  \[
  \Gamma_{\leq \mu}=\{F\in \Gamma:\deg(F) \text{ divides }\mu\}.
  \]
  Observe that $\Gamma_{\leq \mu}$ is precisely the faces of $\Gamma$
  whose vertices all divide $\mu$.  
\end{definition}

\begin{theorem}[Bayer-Peeva-Sturmfels]\label{criterion}  Let $\Gamma$
  be a simplicial 
  complex supported on $M$, and set $I=(M)$.  Then $\Gamma$ supports a
  resolution of $S/I$ if and only if, for all $\mu$, the simplicial complex
  $\Gamma_{\leq \mu}$ has no homology over $k$.
\end{theorem}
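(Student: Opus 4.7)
The plan is to use the multigrading to reduce exactness of $\mathbb{H}_{\Gamma}$ to a topological condition on each $\Gamma_{\leq \mu}$. Since every differential in $\mathbb{H}_{\Gamma}$, including the augmentation $H_{0} \to S/I$, is homogeneous of multidegree $1$, the whole complex decomposes as a direct sum over monomials $\mu$ of its multigraded strands $(\mathbb{H}_{\Gamma})_{\mu}$, each a complex of $k$-vector spaces. Thus $\mathbb{H}_{\Gamma}$ is exact if and only if $(\mathbb{H}_{\Gamma})_{\mu}$ is exact for every $\mu$, and I intend to analyze these strands one multidegree at a time.

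Next I would identify each strand $(H_{\bullet})_{\mu}$ with a familiar topological object. A basis element $[F]$ of $H_{s}$ contributes a one-dimensional piece in multidegree $\mu$ exactly when $\mdeg(F)$ divides $\mu$, i.e., when $F \in \Gamma_{\leq \mu}$; the natural basis vector is $e_{F} = (\mu / \mdeg(F))[F]$. Substituting into the differential of Construction \ref{simplicialres}, the coefficient $\mdeg(F)/\mdeg(G)$ combines with $\mu/\mdeg(F)$ to yield $\mu/\mdeg(G)$, so
\[
\phi(e_{F}) = \sum_{G\text{ a facet of }F} \varepsilon_{G}^{F}\, e_{G}.
\]
These are precisely the coefficients of the $k$-linear simplicial boundary of $\Gamma_{\leq \mu}$ from Construction \ref{topochaincx}. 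Because $H_{0} = S[\varnothing]$ corresponds to the empty face and $(H_{s})_{\mu}$ is indexed by $s$-element faces of $\Gamma_{\leq \mu}$, the strand $(H_{\bullet})_{\mu}$ is (up to a degree shift of one) the augmented chain complex of $\Gamma_{\leq \mu}$ with $k$-coefficients.

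Finally I would conclude by splitting into cases based on whether $\mu \in I$. If $\mu \notin I$, no vertex of $\Gamma$ divides $\mu$, so $\Gamma_{\leq \mu} = \{\varnothing\}$ and $(\mathbb{H}_{\Gamma})_{\mu}$ collapses to $0 \to k\cdot\mu[\varnothing] \to (S/I)_{\mu} \to 0$; the augmentation sends $\mu[\varnothing] \mapsto \mu$, an isomorphism, so exactness is automatic and the reduced-homology hypothesis is trivially met since the positive-dimensional chain groups are zero. If $\mu \in I$, then $(S/I)_{\mu} = 0$ and the strand is precisely the augmented simplicial chain complex of $\Gamma_{\leq \mu}$; its exactness is equivalent to the vanishing of $\tilde{H}_{*}(\Gamma_{\leq \mu}; k)$ (with $\tilde{H}_{-1} = 0$ coming for free from the presence of the vertex $m$ for any generator $m \mid \mu$). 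Combining the two cases gives the stated equivalence.

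The main obstacle is really just careful bookkeeping: reconciling the ``number of vertices'' indexing of $\mathbb{H}_{\Gamma}$ with the ``dimension'' indexing of simplicial homology, and verifying that the monomial coefficients $\mdeg(F)/\mdeg(G)$ cancel cleanly enough that $(H_{\bullet})_{\mu}$ genuinely becomes the $k$-linear simplicial boundary on $\Gamma_{\leq \mu}$ with no residual monomial data.
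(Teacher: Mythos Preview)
Your proposal is correct and follows essentially the same route as the paper: restrict the homogeneous complex $\mathbb{H}_{\Gamma}$ to each multidegree $\mu$, identify the resulting complex of $k$-vector spaces with the (augmented) simplicial chain complex of $\Gamma_{\leq\mu}$, and conclude. Your version is in fact slightly more careful than the paper's---you make the basis change $e_{F}=(\mu/\mdeg(F))[F]$ explicit and you separate the cases $\mu\notin I$ and $\mu\in I$ to handle the augmentation to $S/I$, whereas the paper sweeps this under the phrase ``with minor abuse of notation.''
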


\begin{proof}
Since $\mathbb{H}_{\Gamma}$ is homogeneous, it is exact if and only if it is
exact (as a complex of vector spaces) in every multidegree.  Thus, it
suffices to examine the restriction of $\mathbb{H}_{\Gamma}$ to each
multidegree $\mu$.  

Observe that
$(S[F])_{\mu}\cong S(\frac{1}{\mdeg(F)})_{\mu}\cong S_{\frac{\mu}{\mdeg(F)}}$ is a
one-dimensional vector space with basis $\frac{\mu}{\mdeg(F)}$ if
$\mdeg(F)$ divides $\mu$, and is zero otherwise.  Furthermore, since
the differential maps $\phi$ are homogeneous, the monomials appearing
in their definition are precisely those which map these basis elements
to one another.  Thus $(\mathbb{H}_{\Gamma})_{\mu}$ is, with minor
abuse of notation, precisely the complex of vector spaces which arises
when computing (via Construction \ref{topochaincx}) the homology of
the simplicial complex $\{F\in \Gamma:\mdeg(F) \text{ divides
}\mu\}$, and this complex is $\Gamma_{\leq\mu}$.  

We conclude that $\Gamma$ supports a resolution of $I$ if and only if
$(\mathbb{H}_{\Gamma})_{\mu}$ is exact for every $\mu$, if and only if
$(\mathbb{H}_{\Gamma})_{\mu}$ has no homology for every $\mu$, if and
only if $\Gamma_{\leq\mu}$ has no homology for every $\mu$.
\end{proof}

\begin{example}
The simplicial complexes $\Gamma_{\leq \mu}$ depend on the
underlying monomials $M$, so that it is possible for a simplicial
complex to support a resolution of some monomial ideals but not
others.  For example, the simplicial complex $\Gamma$ in example
\ref{threeexamples}  supports a resolution of $I=(a^{2},ab,b^{3})$
because no monomial is divisible by $a^{2}$ and $b^{3}$ without also
being divisible by $ab$.  However, if we were to relabel the vertices
with the monomials $a$, $b$, and $c$, the resulting simplicial complex
$\Gamma'$ 
would not support a resolution of $(a,b,c)$ because $\Gamma'_{\leq
  ac}$ would consist of two points; this simplicial complex has
nontrivial zeroeth homology.
\end{example}

\begin{remark}
Note that the homology of a simplicial complex can depend on the
choice of field, so some simplicial complexes support resolutions 
over some fields but not others.  For example, if $\Gamma$ is a
triangulation of a torus, it may support a resolution if the field has
characteristic zero, but will not support a resolution in
characteristic two.  In particular, resolutions of monomial ideals can
be characteristic-dependent.
\end{remark}

Theorem \ref{criterion} allows us to give a short proof that the
Taylor resolution is in fact a resolution.

\begin{proof}[Proof of Theorem \ref{taylorthm}]  Let $\mu$ be given.
  Then $\Delta_{\leq\mu}$ is the simplex with vertices $\{m\in M:m
  \text{ divides }\mu\}$, which is either empty or 
  contractible.
\end{proof}

\section{The Scarf complex}

Unfortunately, the Taylor resolution is usually not minimal.  The
nonminimality is visible in the nonzero scalars in the differential
maps, which occur whenever there exist faces $F$ and $G$ with the same
multidegree such that $G$ is in the boundary of $F$.  It is tempting
to try to simply remove the nonminimality by removing all such faces;
the result is the Scarf complex.

\begin{construction}  Let $I$ be a monomial ideal with generating set
  $M$.  Let $\Delta_{I}$ be the full simplex on $M$, and let
  $\Sigma_{I}$ be the simplicial subcomplex of $\Delta_{I}$ consisting
  of the faces with unique multidegree, 
\[
\Sigma_{I}=\{F\in \Delta_{I}:\mdeg(G)=\mdeg(F)\implies G=F\}.
\]
We say that $\Sigma_{I}$ is the \emph{Scarf simplicial complex} of
$I$; the associated algebraic chain complex $\mathbb{S}_{I}$ is called
the \emph{Scarf complex} of $I$.  The multidegrees of the faces of
$\Sigma_{I}$ are called the \emph{Scarf multidegrees} of $I$.
\end{construction}

\begin{remark}  It is not obvious that $\Sigma_{I}$ is a simplicial
  complex.  Let $F\in \Sigma_{I}$; we will show that every subset of
  $F$ is also in $\Sigma_{I}$.  Suppose not; then there exists a
  minimal $G\subset F$ which shares a multidegree with some other
  $H\in \Delta_{I}$.  Let $E$ be the symmetric difference of $G$ and
  $H$.  Then the symmetric difference of $E$ and $F$ has the same
  multidegree as $F$.
\end{remark}

\begin{example}  Let $I=(a^{2},ab,b^{3})$.  Then the Scarf simplicial
  complex of $I$ is the complex $\Gamma$ in figure \ref{thefigure}.
  The Scarf complex of $I$ is the minimal resolution
\[
\mathbb{S}_{I}:0\to \begin{array}{c}S[a^{2},ab]\\ \oplus\\ 
  S[ab,b^{3}]\end{array}
  \xrightarrow{\begin{pmatrix}-b&0\\a&-b^{2}\\0&a\end{pmatrix}} 
  \begin{array}{c}S[a^{2}]\\ \oplus\\ S[ab]\\ \oplus\\
    S[b^{3}]\end{array}
  \xrightarrow{\begin{pmatrix}a^{2}&ab&b^{3}\end{pmatrix}}S[\varnothing]\to S/I \to 0.
\]
\end{example}

\begin{example}\label{notscarf}
Let $I=(ab,ac,bc)$.  The Scarf simplicial complex of $I$ consists of
three disjoint vertices.  The Scarf complex of $I$ is the complex
\[
\mathbb{S}_{I}:0\to \begin{array}{c}S[ab]\\ \oplus\\ S[ac]\\ \oplus\\
    S[bc]\end{array}
  \xrightarrow{\begin{pmatrix}ab&ac&bc\end{pmatrix}} S[\varnothing]\to S/I \to 0.
\]
It is not a resolution.
\end{example}

Example \ref{notscarf} shows that not every monomial ideal is resolved by its
Scarf complex.  We say that a monomial ideal is \emph{Scarf} if its
Scarf complex is a resolution.  

\begin{theorem} If the Scarf complex of $I$ is a resolution, then it
  is minimal.
\end{theorem}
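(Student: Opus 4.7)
My plan is to apply Theorem \ref{minimal} directly. The Scarf complex $\mathbb{S}_I$ is a homogeneous complex of free $S$-modules (this is built into Construction \ref{simplicialres}) and is a resolution of $S/I$ by hypothesis, so minimality will follow as soon as I verify that every differential sends basis elements into $\mathbf{m}$ times the target. From the formula
\[
\phi_{s-1}([F]) = \sum_{G \text{ a facet of } F} \varepsilon_G^F \, \frac{\mdeg(F)}{\mdeg(G)}\,[G]
\]
inherited from Construction \ref{simplicialres}, this reduces to showing that for every face $F \in \Sigma_I$ and every facet $G$ of $F$, the coefficient $\mdeg(F)/\mdeg(G)$ lies in $\mathbf{m}$, i.e., is a non-constant monomial of $S$.

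This is immediate from the defining property of the Scarf simplicial complex. Since $F \in \Sigma_I$, no other face of $\Delta_I$ has the same multidegree as $F$; in particular, a facet $G$ of $F$ is a proper subset of $F$, so $G \neq F$ as elements of $\Delta_I$, which forces $\mdeg(G) \neq \mdeg(F)$. On the other hand, $G \subsetneq F$ always yields $\mdeg(G) = \lcm(m : m \in G) \mid \lcm(m : m \in F) = \mdeg(F)$. Combining these two observations, $\mdeg(G)$ is a proper divisor of $\mdeg(F)$, so the quotient $\mdeg(F)/\mdeg(G)$ is a non-constant monomial and hence lies in $\mathbf{m}$. There is no real obstacle: once Theorem \ref{minimal} is invoked, the proof is a one-line unwinding of the definition of $\Sigma_I$, and the result falls out for every simplicial resolution supported on a subcomplex of $\Sigma_I$ for the same reason.
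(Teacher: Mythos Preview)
Your proof is correct and is exactly the paper's argument spelled out in full: the paper's one-line proof ``By construction, no nonzero scalars can occur in the differential matrices'' is precisely your observation that $\mdeg(G)\neq\mdeg(F)$ for any facet $G$ of a Scarf face $F$, so the coefficient $\mdeg(F)/\mdeg(G)$ lies in $\mathbf{m}$ and Theorem~\ref{minimal} applies.
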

\begin{proof}
By construction, no nonzero scalars can occur in the differential
matrices.
\end{proof}

Bayer, Peeva and Sturmfels \cite{BPS} call an ideal \emph{generic} if
no variable 
appears with the same nonzero exponent in more than one generator.
They show that these ``generic'' ideals are Scarf.

Unfortunately, most interesting monomial ideals are not Scarf.
However, Scarf complexes have proved an important tool in constructing
ideals whose resolutions misbehave in various ways \cite{Ve}.

\begin{theorem}\label{scarfinside} Let $\mathbb{F}$ be a minimal
  resolution of $I$.  Then 
  the Scarf complex of $I$ is a subcomplex of $\mathbb{F}$.
\end{theorem}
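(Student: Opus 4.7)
The strategy is to construct a chain map $\iota: \mathbb{S}_I \to \mathbb{F}$ and verify that each Scarf basis element $[F]$ is sent to a minimal generator of $\mathbb{F}_{|F|}$. Together with the observation that distinct Scarf faces carry distinct multidegrees, this will identify $\mathbb{S}_I$ with a graded direct summand, and hence a subcomplex, of $\mathbb{F}$.

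For the construction of $\iota$, I take the composition $\mathbb{S}_I \hookrightarrow \mathbb{T}_I \to \mathbb{F}$. The first arrow is the tautological inclusion of the Scarf complex into the Taylor resolution; it makes sense because $\Sigma_I$ is a simplicial subcomplex of the full simplex, so the Taylor differential preserves the summand indexed by Scarf faces. The second arrow is any lift of $\operatorname{id}_{S/I}$ to a chain map, furnished by the standard comparison theorem for free resolutions, and it is necessarily a quasi-isomorphism.

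The crux is to show that $\iota([F]) \notin \bm \mathbb{F}_{|F|}$ for every Scarf face $F$. Set $\sigma := \mdeg(F)$ and $i := |F|$, and compute $\Tor_i^S(S/I,k)_\sigma$ via $\mathbb{T}_I$. A $k$-basis of the multidegree-$\sigma$ strand of $\mathbb{T}_I \otimes_S k$ in homological degree $j$ is indexed by $\{G : |G|=j,\ \mdeg(G)=\sigma\}$; by the Scarf property this set equals $\{F\}$ when $j=i$ and is empty otherwise. Hence $(\mathbb{T}_I \otimes_S k)_\sigma$ is one-dimensional and concentrated in degree $i$, so $[F]$ is automatically a cycle that is not a boundary, representing a nonzero class in $\Tor_i^S(S/I,k)_\sigma$. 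Because $\mathbb{T}_I \to \mathbb{F}$ induces an isomorphism on Tor and, by Theorem~\ref{minimal}, the differential of $\mathbb{F}$ lies in $\bm \mathbb{F}$ (so $\mathbb{F} \otimes_S k$ has zero differential and $\Tor_i^S(S/I,k) \cong \mathbb{F}_i \otimes_S k$), the image of $\iota([F])$ in $\mathbb{F}_i \otimes_S k$ is nonzero; equivalently, $\iota([F]) \notin \bm\mathbb{F}_i$.

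To conclude, the Scarf condition guarantees that any two distinct Scarf faces of the same size have different multidegrees, so in each homological degree $i$ the set $\{\iota([F]) : F \in \Sigma_I,\ |F|=i\}$ lies in pairwise distinct multigraded components of $\mathbb{F}_i$ and is nonzero modulo $\bm\mathbb{F}_i$; thus it is $k$-linearly independent modulo $\bm\mathbb{F}_i$, and Nakayama's lemma extends it to a minimal homogeneous basis of $\mathbb{F}_i$. Making such a choice in every homological degree exhibits $\iota$ as the inclusion of a subcomplex of $\mathbb{F}$. The only real obstacle is the middle step — verifying that $[F]$ represents a nonzero Tor class — but this is essentially forced by the defining uniqueness of Scarf multidegrees.
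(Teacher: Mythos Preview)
Your proof is correct and follows essentially the same strategy as the paper's: both hinge on the observation that a Scarf multidegree $\sigma=\mdeg(F)$ supports exactly one Taylor symbol, forcing $\Tor_{|F|}(S/I,k)_\sigma$ (equivalently, the Betti number $b_{|F|,\sigma}$) to be one-dimensional. The only difference is cosmetic---the paper runs the comparison map as an embedding $\mathbb{F}\hookrightarrow\mathbb{T}_I$ and identifies the Scarf-degree generators of $\mathbb{F}$ with the Scarf symbols, whereas you use a comparison map in the other direction, $\mathbb{T}_I\to\mathbb{F}$, and check that the Scarf symbols land on minimal generators.
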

\begin{proof}  This is \cite{Pe}*{Proposition 59.2}.  The proof requires
  a couple of standard facts about resolutions, but is otherwise
  sufficiently reliant on the underlying 
  simplicial complexes that we reproduce it anyway.

  We know (see, for example, \cite{Pe}*{Section 9}) that
  there is a homogeneous inclusion of complexes from $\mathbb{F}$ to the
  Taylor complex $\mathbb{T}$.  We also know that the multigraded
  Betti numbers of $I$, which count the generators of $\mathbb{F}$,
  can be computed from the homology of the simplicial complexes
  $\Delta_{\lneqq m}$ (\cite{Pe}*{Section 57}).  If $m=\mdeg(G)$ is a
  Scarf multidegree, then $b_{|G|,m}(S/I)=1$ and $b_{i,m}(S/I)=0$ for
    all other $I$.  If $m$ divides a Scarf multidegree but is not
    itself a Scarf multidegree, then $b_{i,m}(S/I)=0$ for all $i$.  In
    particular, when $m$ is a Scarf multidegree, the Betti numbers of
    multidegree $m$ also count the number of faces of multidegree $M$
    in both $\Delta_{I}$ and $\Sigma_{I}$; these numbers are never
    greater than one.
    
    By induction on multidegrees, each generator of
    $\mathbb{F}$ with a Scarf multidegree must (up to a scalar) be
    mapped under the inclusion to the unique generator of the Taylor
    resolution with the same multidegree.  However, these are exactly the
    generators of the Scarf complex.  Thus, the inclusion from
    $\mathbb{F}$ to $\mathbb{T}$ induces an inclusion from
    $\mathbb{S}$ to $\mathbb{F}$.
\end{proof}

\section{The Lyubeznik resolutions}

If the Taylor resolution is too large, and the Scarf complex is too
small, we might still hope to construct simplicial resolutions
somewhere in between.  Velasco \cite{Ve} shows that it is impossible to get the
minimal resolution of every ideal in this way, even if we replace
simplicial complexes with much more general topological objects.
However, there are still classes of simplicial resolutions which are
in general much smaller than the Taylor resolution, yet still manage
to always be resolutions.  One such class is the class of Lyubeznik
resolutions, introduced below.

Our construction follows the treatment in an excellent paper of Novik
\cite{No}, which presents the Lyubeznik resolutions as special cases
of resolutions arising from ``rooting maps''.  The only difference
between the following construction and Novik's paper is that the extra
generality has been removed, and the notation is correspondingly
simplified.

\begin{construction}
Let $I$ be a monomial ideal with generating set $M$, and fix an
ordering $\prec$ on the monomials appearing in $M$.  (We do not require
that $\prec$ have any special property, such as a term order; any total
ordering will do.)  Write $M=\{m_{1},\dots, m_{s}\}$ with $m_{i}\prec
m_{j}$ whenever $i<j$.  

Let $\Delta_{I}$ be the full simplex on $M$; for a monomial $\mu\in
I$, set $\min(\mu)=\min_{\prec}\{m_{i}:m_{i}\text{ divides }\mu\}$.  For
a face $F\in
\Delta_{I}$, set $\min(F)=\min(\mdeg(F))$.  Thus $\min(F)$ is a
monomial.  We expect that in fact 
$\min(F)$ is a vertex of $F$, but this need not be the case:  for
example, if $F=\{a^{2},b^{2}\}$, we could have $\min(F)=ab$.  

We say that a face $F$ is \emph{rooted} if every nonempty subface
$G\subset F$ satisfies $\min(G)\in G$.  (Note that in particular
$\min(F)\in F$.)  By construction, the set $\Lambda_{I,\prec}=\{F\in
\Delta_{I}: F \text{ is rooted}\}$ is a simplicial complex; we call it
the \emph{Lyubeznik simplicial complex} associated to $I$ and
$\prec$.  The associated algebraic chain complex $\mathbb{L}_{I,\prec}$
is called a \emph{Lyubeznik resolution} of $I$.
\end{construction}

\begin{example}  Let $I=(ab,ac,bc)$.  Then there are three distinct
  Lyubeznik resolutions of $I$, corresponding to the simplicial
  complexes pictured in Figure \ref{abacbc}:  
\begin{figure}[hbtf]
\includegraphics[width=5in]{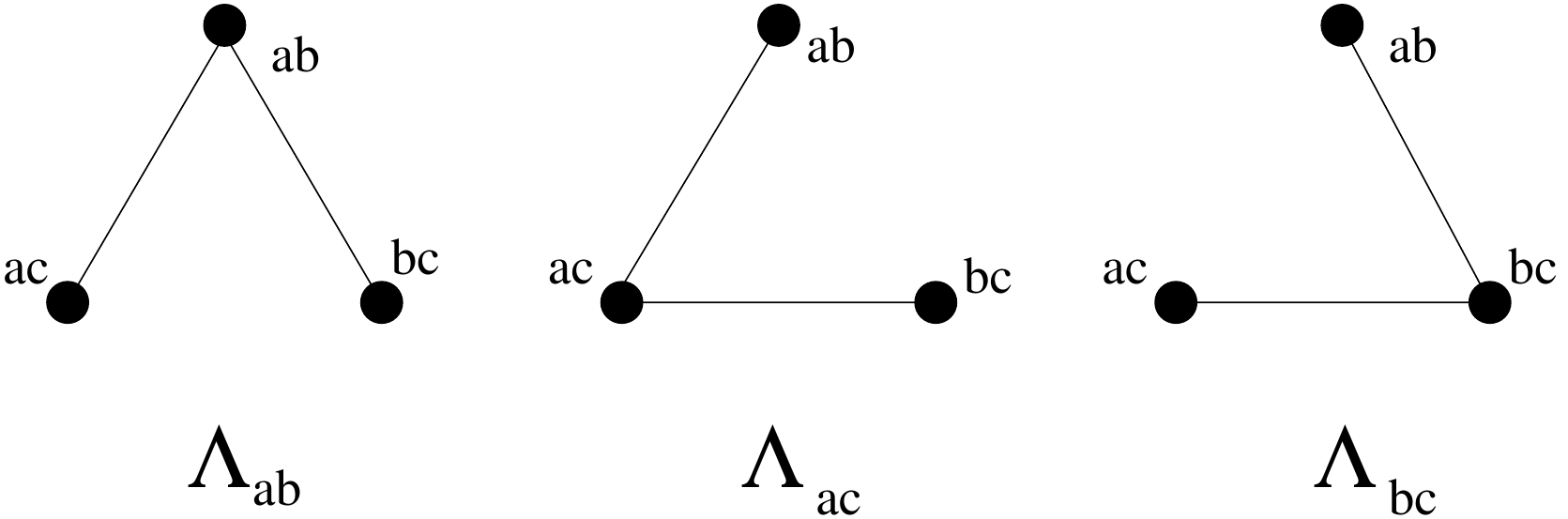}
\caption{The Lyubeznik resolutions of $I=(ab,ac,bc)$.}\label{abacbc}
\end{figure}
  $\Lambda_{ab}$ arises
  from the orders $ab\prec ac\prec bc$ and $ab\prec bc\prec ac$,
  $\Lambda_{ac}$ arises from the orders with $ac$ first, and
  $\Lambda_{bc}$ arises from the orders with $bc$ first.  Each of
  these resolutions is minimal.
\end{example}

\begin{example}\label{lyubex2}
Let $I=(a^{2},ab,b^{3})$.  There are two Lyubeznik resolutions of $I$:
the Scarf complex, arising from the two orders with $ab$ first, and
the Taylor resolution, arising from the other four orders.  The
corresponding simplicial complexes are pictured in figure
\ref{lyubstd}.
\begin{figure}[hbtf]
\includegraphics[width=3in]{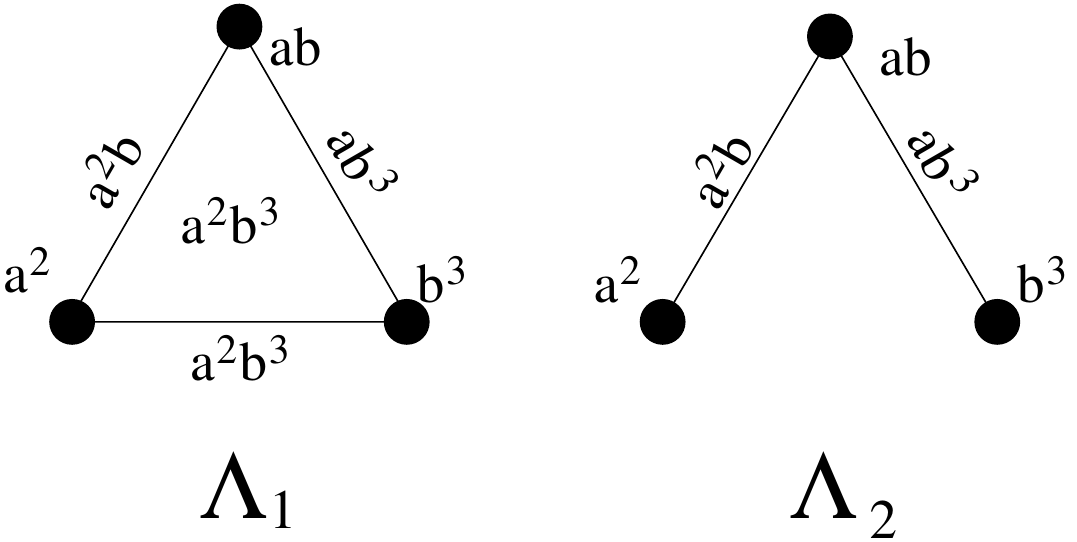}
\caption{The Lyubeznik resolutions of $I=(a^{2},ab,b^{3})$.}\label{lyubstd}
\end{figure}
\end{example}

\begin{remark}
It is unclear how to choose a total ordering on the generators of $I$
which produces a smaller Lyubeznik resolution.  Example \ref{lyubex2}
suggests that the obvious choice of a term order is a bad one:
the lex and graded (reverse) lex orderings all yield the Taylor
resolution, while the minimal resolution arises from orderings which
cannot be term orders.
\end{remark}

We still need to show that, unlike the Scarf complex, the Lyubeznik
resolution is actually a resolution.

\begin{theorem} The Lyubeznik resolutions of $I$ are resolutions.
\end{theorem}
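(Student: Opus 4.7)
My plan is to invoke the Bayer--Peeva--Sturmfels criterion (Theorem~\ref{criterion}): $\Lambda_{I,\prec}$ supports a resolution of $S/I$ if and only if each restricted subcomplex $(\Lambda_{I,\prec})_{\leq\mu}$ is acyclic over $k$. So the whole proof reduces to showing, for each multidegree $\mu$, that this restricted simplicial complex has no reduced homology. This parallels the short proof of Theorem~\ref{taylorthm}, the difference being that here the restricted complex is typically strictly smaller than a full simplex.

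I would establish acyclicity by exhibiting a cone structure. The natural candidate for an apex is $m^{\ast}=\min_{\prec}\{m\in M:m\text{ divides }\mu\}$; when this set is empty the restricted complex is trivial and there is nothing to check. The content of the argument is then to verify that for every rooted face $F$ whose vertices all divide $\mu$, the face $F\cup\{m^{\ast}\}$ is again rooted. The multidegree condition $\mdeg(F\cup\{m^{\ast}\})\mid \mu$ is automatic since $m^{\ast}\mid\mu$, so the issue is genuinely about rootedness.

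The step that requires genuine care, and the one I expect to be the main (mild) obstacle, is checking $\min(G)\in G$ for a subface $G\subset F\cup\{m^{\ast}\}$ that contains $m^{\ast}$. For such a $G$ I would use two observations in tandem: first, $\mdeg(G)\mid\mu$, so any generator dividing $\mdeg(G)$ lies in $\{m\in M:m\mid\mu\}$ and is therefore $\succeq m^{\ast}$; second, $m^{\ast}$ itself divides $\mdeg(G)$ because $m^{\ast}\in G$. Together these force $\min(G)=m^{\ast}$, which indeed lies in $G$. For subfaces $G$ not containing $m^{\ast}$ one has $G\subset F$, so $\min(G)\in G$ is immediate from the rootedness of $F$. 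With the cone structure established, $(\Lambda_{I,\prec})_{\leq\mu}$ is contractible, its reduced homology vanishes, and Theorem~\ref{criterion} closes the proof.
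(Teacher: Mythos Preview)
Your proposal is correct and follows essentially the same route as the paper: invoke Theorem~\ref{criterion}, then for each $\mu\in I$ show that $(\Lambda_{I,\prec})_{\leq\mu}$ is a cone with apex $m^{\ast}=\min_{\prec}\{m\in M:m\mid\mu\}$ by verifying that $F\cup\{m^{\ast}\}$ is again rooted whenever $F$ is. Your case analysis on whether a subface $G$ contains $m^{\ast}$ is exactly the argument the paper gives, stated with slightly more care.
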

\begin{proof} Let $M=(m_{1},\dots, m_{s})$ be the generators of $I$
  and fix an order 
  $\prec$ on $M$.  For each multidegree $\mu$, we need to show that
  the simplicial subcomplex $(\Lambda_{I,\prec})_{\leq \mu}$, consisting
  of the rooted faces with multidegree dividing $\mu$, has no
  homology.

  If $\mu\not\in I$, this is the empty complex.  If $\mu\in I$, we
  claim that $(\Lambda_{I,\prec})_{\leq \mu}$ is a cone.

  Suppose without
  loss of generality that $m_{1}=\min(\mu)$.  We claim that, if $F$ is a
  face of $(\Lambda_{I,\prec})_{\leq \mu}$, then $F\cup \{x_{1}\}$ is a
  face as well.  First, note that $\mdeg(F\cup \{x_{1}\})$ divides
  $\mu$ because both $x_{1}$ and $\mdeg(F)$ do.  Thus it suffices
  to show that $F\cup \{x_{1}\}$ is rooted.
  Observe that $\min(F\cup \{x_{1}\})=x_{1}$
  because $\mdeg(F\cup \{x_{1}\})$ divides $\mu$ and $x_{1}$ divides
  $\mdeg(F\cup \{x_{1}\})$.  If $G\subset F$, then $\min(G)\in G$
  because $F$ is rooted, and $\min(G\cup \{x_{1}\})=x_{1}$.  Thus
  $F\cup \{x_{1}\}$ is rooted.

  Hence $(\Lambda_{I,\prec})_{\leq \mu}$ is a simplicial cone on $x_{1}$
  and is contractible.
\end{proof}

\section{Intersections}

The only new result of this paper is that the Scarf complex of an
ideal $I$ is the intersection of all its minimal resolutions.  To make
this statement precise, we need to refer to some ambient space that
contains all the minimal resolutions; the natural choice is the Taylor
resolution.

\begin{theorem}\label{thethm}
Let $I$ be a monomial ideal.  Let $\mathbb{D}_{I}$ be the intersection
of all isomorphic 
embeddings of the minimal resolution of $I$ in its Taylor resolution.
Then $\mathbb{D}_{I}=\mathbb{S}_{I}$
is the Scarf complex of $I$.
\end{theorem}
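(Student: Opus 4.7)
The plan is to prove $\mathbb{S}_I\subseteq \mathbb{D}_I$ and $\mathbb{D}_I\subseteq \mathbb{S}_I$ separately. The first inclusion is immediate from Theorem~\ref{scarfinside}: every isomorphic embedding of the minimal resolution into $\mathbb{T}_I$ contains the Scarf complex as a subcomplex, so their intersection $\mathbb{D}_I$ does as well.

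For the reverse containment I would use the Lyubeznik resolutions as intermediaries. The key combinatorial identity is
\[
\Sigma_I = \bigcap_{\prec} \Lambda_{I,\prec},
\]
where $\prec$ ranges over all total orderings of $M$. The inclusion $\subseteq$ is easy: if $G$ is Scarf, then every generator $m\in M$ dividing $\mdeg(G)$ must already lie in $G$ (otherwise $G\cup\{m\}$ would share the multidegree of $G$), so $\min(G)\in G$ for every ordering; combined with the Remark that subfaces of Scarf faces are Scarf, this applies to every subface of $G$ and shows $G$ is rooted. For $\supseteq$, given a non-Scarf face $F$, I would pick a face $G$ of minimum cardinality satisfying $\mdeg(G)=\mdeg(F)$ and split into three cases. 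If $G\subsetneq F$, any $m\in F\setminus G$ divides $\mdeg(G)$ but is not in $G$; placing $m$ first in $\prec$ makes $G$ unrooted. If $G\not\subseteq F$, any $m\in G\setminus F$ divides $\mdeg(F)$ but is not in $F$; placing $m$ first makes $F$ itself unrooted. If $G=F$, the minimality of $|G|$ forces every witness $H\neq F$ with $\mdeg(H)=\mdeg(F)$ to satisfy $H\not\subseteq F$, so we can pick $m\in H\setminus F$ and reduce to the previous case. In every case $F\notin \Lambda_{I,\prec}$.

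Given this identity, for each non-Scarf face $F$ I can choose $\prec$ with $F\notin\Lambda_{I,\prec}$. The Lyubeznik resolution $\mathbb{L}_{I,\prec}$ sits naturally in $\mathbb{T}_I$ as the submodule $\bigoplus_{G\in\Lambda_{I,\prec}} S[G]$, which does not contain $[F]$. Using the standard decomposition of any free resolution as a direct sum of the minimal resolution and a contractible complex (see \cite{Pe}*{Section~9}), I obtain an isomorphic embedding $\mathbb{F}\hookrightarrow\mathbb{L}_{I,\prec}\hookrightarrow\mathbb{T}_I$ whose image is contained in $\bigoplus_{G\in\Lambda_{I,\prec}} S[G]$. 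Since $\mathbb{D}_I$ is contained in every such image, intersecting over $\prec$ yields
\[
\mathbb{D}_I \subseteq \bigcap_{\prec} \bigoplus_{G\in\Lambda_{I,\prec}} S[G] = \bigoplus_{G\in\Sigma_I} S[G] = \mathbb{S}_I,
\]
completing the proof.

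The main obstacle is the combinatorial identity $\Sigma_I=\bigcap_\prec \Lambda_{I,\prec}$, and specifically the case analysis that produces an ordering unrooting a given non-Scarf face. The minimum-cardinality trick is the heart of the argument; the rest is assembly of tools already in the paper together with the standard splitting of free resolutions.
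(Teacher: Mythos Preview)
Your proposal is correct and follows the same overall strategy as the paper: use Theorem~\ref{scarfinside} for $\mathbb{S}_I\subseteq\mathbb{D}_I$, and use the Lyubeznik resolutions to show the intersection of all embedded minimal resolutions lies in $\mathbb{S}_I$ via the identity $\bigcap_\prec\Lambda_{I,\prec}=\Sigma_I$.

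A few remarks on economy. First, the inclusion $\Sigma_I\subseteq\bigcap_\prec\Lambda_{I,\prec}$ is not needed for the theorem; the containment $\mathbb{S}_I\subseteq\mathbb{D}_I$ already comes from Theorem~\ref{scarfinside}, so only $\bigcap_\prec\Lambda_{I,\prec}\subseteq\Sigma_I$ matters. Second, your minimum-cardinality trick and three-case split are more than you need: given any $H\neq F$ with $\mdeg(H)=\mdeg(F)$, either $H\subsetneq F$ (pick $m\in F\setminus H$; then $m$ divides $\mdeg(H)$, and putting $m$ first makes the subface $H$ of $F$ fail $\min(H)\in H$) or $H\not\subseteq F$ (pick $m\in H\setminus F$ and put it first). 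The paper instead argues the direct implication: if $F$ lies in every $\Lambda_{I,\prec}$, then placing any generator $m\mid\mdeg(F)$ first forces $m\in F$, so $F$ contains \emph{every} generator dividing $\mdeg(F)$; any $G$ with $\mdeg(G)=\mdeg(F)$ then satisfies $G\subseteq F$, hence $G$ also lies in every $\Lambda_{I,\prec}$, and the same reasoning gives $F\subseteq G$. This is shorter and avoids case analysis entirely, but your argument is equally valid.
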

\begin{proof}
We showed in Theorem \ref{scarfinside} that the Scarf complex is
contained in this intersection.  It suffices to show that the
intersection of all minimal resolutions lies inside the Scarf
complex.  We will show that in fact the intersection of all the
Lyubeznik resolutions is the Scarf complex.

Suppose that $F$ is a face of every Lyubeznik simplicial complex.
This means that, regardless of the ordering of the monomial generators
of $I$, the first generator dividing $\mdeg(F)$ appears in $F$.
Equivalently, every generator which divides $\mdeg(F)$ appears as a
vertex of
$F$. Thus, $F$ is the complete simplex on the vertices with
multidegree dividing $\mdeg(F)$.

Now suppose that there exists
another face $G$ 
with the same multidegree as $F$.  Every vertex of $G$ divides
$\mdeg(G)=\mdeg(F)$, so in particular $G\subset F$.  But this means
that $G$ is also a face of every Lyubeznik simplicial complex, so
every generator dividing $\mdeg(G)$ is a vertex of $G$ by the above
argument.  In particular, $F=G$.  This proves that $F$ is the unique
face with multidegree $\mdeg(F)$, i.e., $F$ is in the Scarf complex.
\end{proof}

\section{Questions}

The viewpoint that allows us to consider the statement of Theorem
\ref{thethm} requires that we consider a resolution together with its
basis, so resolutions which are isomorphic as algebraic chain
complexes can still be viewed as different objects.  The common use of
the phrase ``\emph{the} minimal resolution'' (instead of ``\emph{a}
minimal resolution'') suggests that this this point of view is
relatively new, or at any rate has not been deemed significant.  In
any event, there are some natural questions which would not make sense
from a more traditional point of view.

\begin{question}
Let $I$ be a monomial ideal.  Are there (interesting) resolutions of
$I$ which are not subcomplexes of the Taylor resolution?
\end{question}

All the interesting resolutions I understand are subcomplexes of the
Taylor complex in a very natural way:  their basis elements can be expressed
with relative ease as linear combinations of Taylor symbols.  If a
resolution is a subcomplex of the Taylor resolution, then it is
simplicial if and only if all its basis elements are Taylor symbols.
For a simplicial resolution to fail to be a subcomplex of the Taylor
complex, the set of vertices of its underlying simplicial complex must
not be a subset of the generators - in other words, the underlying
presentation must not be minimal.  

\begin{question}
Let $I$ be a monomial ideal.  Are there (interesting) resolutions of
$I$ with non-minimal first syzygies?
\end{question}

My suspicion is that such resolutions may exist, at least for special
classes of ideals, and may be useful in the 
study of homological invariants such as regularity which are
interested in the degree, rather than the number, of generators.

\begin{bibdiv}
\begin{biblist}[\resetbiblist{BPS}]

\bib{BPS}{article}{label={BPS},
   author={Bayer, Dave},
   author={Peeva, Irena},
   author={Sturmfels, Bernd},
   title={Monomial resolutions},
   journal={Math. Res. Lett.},
   volume={5},
   date={1998},
   number={1-2},
   pages={31--46},
   issn={1073-2780},
   review={\MR{1618363 (99c:13029)}},
}

\bib{BS}{article}{label={BS},
   author={Bayer, Dave},
   author={Sturmfels, Bernd},
   title={Cellular resolutions of monomial modules},
   journal={J. Reine Angew. Math.},
   volume={502},
   date={1998},
   pages={123--140},
   issn={0075-4102},
   review={\MR{1647559 (99g:13018)}},
   doi={10.1515/crll.1998.083},
}

\bib{BW}{article}{label={BW},
   author={Batzies, E.},
   author={Welker, V.},
   title={Discrete Morse theory for cellular resolutions},
   journal={J. Reine Angew. Math.},
   volume={543},
   date={2002},
   pages={147--168},
   issn={0075-4102},
   review={\MR{1887881 (2003b:13017)}},
   doi={10.1515/crll.2002.012},
}

\bib{Cl}{article}{label={Cl},
   author={Clark, Timothy B. P.},
   title={Poset resolutions and lattice-linear monomial ideals},
   journal={J. Algebra},
   volume={323},
   date={2010},
   number={4},
   pages={899--919},
   issn={0021-8693},
   review={\MR{2578585 (2011b:13037)}},
   doi={10.1016/j.jalgebra.2009.11.029},
}

\bib{GS}{report}{label={GS},
  author={Grayson, Daniel R.},
  author={Stillman, Michael E.},
  title={Macaulay 2, a software system for research in algebraic
    geometry},
  eprint={http://www.math.uiuc.edu/Macaulay2/},
}

\bib{JW}{article}{label={JW},
   author={J{\"o}llenbeck, Michael},
   author={Welker, Volkmar},
   title={Minimal resolutions via algebraic discrete Morse theory},
   journal={Mem. Amer. Math. Soc.},
   volume={197},
   date={2009},
   number={923},
   pages={vi+74},
   issn={0065-9266},
   isbn={978-0-8218-4257-7},
   review={\MR{2488864 (2009m:13017)}},
}

\bib{Ma}{article}{label={Ma},
  author={Mapes, Sonja},
  title={Finite atomic lattices and resolutions of monomial ideals},
  status={preprint},
  eprint={arXiv:1009.1430},
  date={2010},
}

\bib{No}{article}{label={No},
   author={Novik, Isabella},
   title={Lyubeznik's resolution and rooted complexes},
   journal={J. Algebraic Combin.},
   volume={16},
   date={2002},
   number={1},
   pages={97--101},
   issn={0925-9899},
   review={\MR{1941987 (2003j:13021)}},
   doi={10.1023/A:1020838732281},
}

\bib{NR}{article}{label={NR},
   author={Nagel, Uwe},
   author={Reiner, Victor},
   title={Betti numbers of monomial ideals and shifted skew shapes},
   journal={Electron. J. Combin.},
   volume={16},
   date={2009},
   number={2, Special volume in honor of Anders Bjorner},
   pages={Research Paper 3, 59},
   issn={1077-8926},
   review={\MR{2515766 (2010h:13022)}},
}

\bib{Pe}{book}{label={Pe},
  author={Peeva, Irena},
  title={Graded Syzygies},
  publisher={Springer},
  date={2010},
  isbn={978-0-85729-176-9},
  address={London},
  doi={10-1007/978-0-85729-177-6},
  series={Algebra and Applications},
  volume={14},
}

\bib{Ph}{article}{label={Ph},
  author={Phan, Jeffry},
  title={Minimal monomial ideals and linear resolutions},
  status={preprint},
  date={2005},
  eprint={arXiv:math/0511032v2},
}

\bib{PV}{article}{label={PV},
  author={Peeva, Irena},
  author={Velasco, Mauricio},
  title={Frames and degenerations of monomial resolutions}
  journal={Trans. Amer. Math. Soc.},
  status={to appear},
  }

\bib{Si}{article}{label={Si},
   author={Sinefakopoulos, Achilleas},
   title={On Borel fixed ideals generated in one degree},
   journal={J. Algebra},
   volume={319},
   date={2008},
   number={7},
   pages={2739--2760},
   issn={0021-8693},
   review={\MR{2397405 (2008m:13020)}},
   doi={10.1016/j.jalgebra.2008.01.017},
}

\bib{Tc}{article}{label={Tc},
   author={Tchernev, Alexandre B.},
   title={Representations of matroids and free resolutions for multigraded
   modules},
   journal={Adv. Math.},
   volume={208},
   date={2007},
   number={1},
   pages={75--134},
   issn={0001-8708},
   review={\MR{2304312 (2008i:13020)}},
   doi={10.1016/j.aim.2006.02.002},
}

\bib{Ve}{article}{label={Ve},
   author={Velasco, Mauricio},
   title={Minimal free resolutions that are not supported by a CW-complex},
   journal={J. Algebra},
   volume={319},
   date={2008},
   number={1},
   pages={102--114},
   issn={0021-8693},
   review={\MR{2378063 (2008j:13028)}},
   doi={10.1016/j.jalgebra.2007.10.011},
}

\end{biblist}
\end{bibdiv}
\end{document}